\newlength{\rulebreite}
\def\timesover#1#2#3{\ \xymatrix@1@=0pt@M=0pt{ _{#1}&\times&_{#2} \\& ^{#3}&}\ }
\def\otimesover#1#2#3{\ \xymatrix@1@=0pt@M=0pt{ _{#1}&\otimes&_{#2} \\& ^{#3}&}\ }
\theoremstyle{plain}
\newtheorem{thm}{Theorem}
\newtheorem{lem}[thm]{Lemma}
\newtheorem{cor}[thm]{Corollary}
\newtheorem{prop}[thm]{Proposition}
\newtheorem{prop-ex}[thm]{Example}
\theoremstyle{definition}
\newtheorem{defn}[thm]{Definition}
\numberwithin{thm}{section}
\numberwithin{equation}{section}
\newcommand{\ml}[2]{\begin{multline}\label{#1}#2 \end{multline}}
\newcommand{\ga}[2]{\begin{gather}\label{#1}#2 \end{gather}}
\newcommand{\Pic}{{\rm Pic}}
\newcommand{\Spec}{{\rm Spec \,}}
\newcommand{\sA}{{\mathcal A}}
\newcommand{\sB}{{\mathcal B}}
\newcommand{\sC}{{\mathcal C}}
\newcommand{\sH}{{\mathcal H}}
\newcommand{\sO}{{\mathcal O}}
\newcommand{\sP}{{\mathcal P}}
\newcommand{\sT}{{\mathcal T}}
\newcommand{\A}{{\mathbb A}}
\newcommand{\G}{{\mathbb G}}
\newcommand{\N}{{\mathbb N}}
\newcommand{\Z}{{\mathbb Z}}
\begin{document}
\title[Surface singularities]{Surface singularities dominated by smooth varieties}
\author{H\'el\`ene Esnault}
\address{
Universit\"at Duisburg-Essen, Mathematik, 45117 Essen, Germany}
\email{esnault@uni-due.de}
\author{Eckart Viehweg}
\address{Universit\"at Duisburg-Essen, Mathematik, 45117 Essen, Germany}
\email{viehweg@uni-due.de}
\date{January 29, 2010}
\thanks{Partially supported by  the DFG Leibniz Preis, the SFB/TR45, the ERC Advanced Grant 226257}
\begin{abstract} We give a version in characteristic $p>0$ of Mumford's theorem  characterizing a smooth complex germ of surface $(X,x)$ by the triviality of the topological fundamental group of $U=X\setminus \{x\}$. 
\end{abstract}
\maketitle
\section{Introduction} Let $(X,x)$ be a 2-dimensional normal complex analytic germ. Let $U=X\setminus \{x\}$. Mumford  (\cite{Mum}) showed the celebrated theorem 
\begin{thm}[Mumford] $(X,x)$ if smooth if and only if the topological fundamental group of $U$ is trivial. 
 \end{thm}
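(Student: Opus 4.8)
\medskip
\noindent\textbf{Proof plan.} The implication ``$(X,x)$ smooth $\Rightarrow\pi_1(U)=1$'' is immediate: a small punctured representative of a smooth germ deformation retracts onto its link, a $3$-sphere $S^{3}\subset\C^{2}$, which is simply connected.

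For the converse, the plan is to read $\pi_1(U)$ off a resolution and then force the exceptional locus to be empty. First I would choose, by resolution of surface singularities, a morphism $\pi\colon Y\to X$ with $Y$ smooth and reduced exceptional divisor $E=\bigcup_i E_i$ a normal crossings divisor, minimal among such. For a sufficiently small Stein neighbourhood $N$ of $x$, the preimage $\pi^{-1}(N)$ is a regular neighbourhood of $E$ in $Y$, diffeomorphic to the plumbing, along the dual graph $\Gamma$, of the disc bundles of Euler numbers $e_i=E_i^{2}$ over the curves $E_i$; its boundary $M$, the link of $(X,x)$, is a closed oriented $3$-manifold, and $\pi$ induces a diffeomorphism $\pi^{-1}(N)\setminus E\xrightarrow{\ \sim\ }N\setminus\{x\}$, so $U$ deformation retracts onto $M$ and $\pi_1(U)\cong\pi_1(M)$. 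Then I would write down the plumbing presentation of $\pi_1(M)$: generators a meridian $x_i$ of each $E_i$ together with $a_{ik},b_{ik}$ ($1\le k\le g_i$, with $g_i$ the genus of $E_i$), subject to: $x_i$ central in the subgroup attached to the vertex $i$; $[x_i,x_j]=1$ whenever $E_i\cap E_j\neq\emptyset$; and one ``vertex relation'' $x_i^{-e_i}=\big(\prod_k[a_{ik},b_{ik}]\big)\prod_{j:\,E_i\cap E_j\neq\emptyset}x_j$ for each $i$.

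Next, assuming $\pi_1(M)=1$, I would abelianise: the commutators drop out, the vertex relations become the linear system $A\cdot(x_i)=0$ for $A=(E_i\cdot E_j)$ the intersection matrix, and the $2\sum_i g_i$ classes $a_{ik},b_{ik}$ remain free, as does one class for each independent cycle of $\Gamma$. Hence
$$
0=H_1(M)\;\cong\;\mathrm{coker}(A)\ \oplus\ \Z^{\,2\sum_i g_i+b_1(\Gamma)} ,
$$
which forces every $g_i=0$ (each $E_i\cong\PP^{1}$), $b_1(\Gamma)=0$ ($\Gamma$ a tree), and $|\det A|=1$; recall moreover that $A$ is negative definite, $E$ being contractible. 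It then remains to show that a \emph{nonempty} tree of rational curves with negative definite, unimodular intersection matrix is incompatible with $\pi_1(M)=1$; granting this, $E=\emptyset$, $\pi$ is an isomorphism, and $(X,x)$ is smooth.

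The hard part will be exactly this last step, and it genuinely needs the non-abelian structure of $\pi_1(M)$: the homological constraints above do \emph{not} suffice, since the $E_{8}$-configuration of eight $(-2)$-curves is a negative definite, unimodular tree of rational curves whose link is the Poincar\'e homology sphere, with fundamental group the nontrivial binary icosahedral group --- which is why the naive $H_1$-version of the theorem is false. I would argue directly with the presentation: passing to a minimal configuration (contracting $(-1)$-curves, which leaves $M$ unchanged) reduces one, since $(X,x)$ is assumed non-smooth, to a nonempty negative definite tree of rational curves with all $E_i^{2}\le-2$; a careful group-theoretic analysis of the vertex relations of such a tree --- keeping track that the meridians $x_i$ survive and exploiting negative definiteness at the leaves --- then produces a nontrivial quotient of $\pi_1(M)$ (a finite quotient-singularity group when $(X,x)$ is a quotient singularity, an infinite group otherwise), contradicting $\pi_1(M)=1$. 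The essential obstacle is this passage from homological to genuine triviality of the (generally infinite, non-abelian) group $\pi_1(M)$; it is also the step with no naive analogue in characteristic $p$, where $\pi_1(U)$ must be replaced by $\pi_1^{\et}(U)$, which does not see the generators $a_{ik},b_{ik}$ and is affected by wild ramification --- the source of the extra work in the present paper.
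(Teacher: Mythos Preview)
The paper does not give its own proof of this theorem: it is stated in the introduction as Mumford's result and attributed to \cite{Mum}, with Flenner's refinement \cite{Fle} also cited; the paper's original content begins only with the Nori-type replacement in characteristic $p$. So there is no in-paper proof to compare against.

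That said, your outline is a faithful sketch of Mumford's original argument in \cite{Mum}: resolve, identify $U$ with a regular neighbourhood of the exceptional curve minus $E$, read off the plumbing presentation of $\pi_1(M)$, abelianise to force a tree of rational curves with unimodular negative-definite intersection matrix, and then carry out the delicate non-abelian step. You correctly flag the $E_8$/Poincar\'e sphere example as the reason the homological reduction alone cannot finish the argument, and you correctly locate the genuine difficulty in the last step. The one place your write-up is thin is precisely there: ``a careful group-theoretic analysis \ldots\ produces a nontrivial quotient'' is a promise rather than an argument, and in Mumford's paper this step occupies the bulk of the work (an induction on the tree, splitting off leaves and controlling the meridians via the inequalities $E_i^2\le -2$ after blowing down all $(-1)$-curves). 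If you intend this as a proof rather than a plan, that step needs to be written out; as a summary of the strategy and of where the content lies, it is accurate.
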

This is a remarkable theorem which connects a topological notion to a 
scheme-theoritic one. His theorem has been a bit refined by Flenner \cite{Fle} who showed that in fact, the conclusion remains true if one replaces the topological by the \'etale fundamental group of $U$, that is by its profinite completion. Then one can replace the analytic germ by a complete or henselian germ over an algebraically closed  field $k$ of characteristic $0$. 

 If $k$ is an algebraically closed  field $k$ of characteristic $p>0$, Mumford himself observed that the theorem is no longer true. As an example, while in characteristic $0$,  the singularity $z^2+xy$ is the quotient of $\widehat{\A}^2$, the completion of $\A^2$ at the origin,  by the group $\Z/2$ acting  via ${\rm diag}(-1,-1)$, in characteristic $2$, it is the quotient of $\widehat{\A}^2$ by $\mu_2=\Spec k[t]/(t^2-1)$ acting via ${\rm diag}(t,t)$. Thus $\pi^{\rm et}(U)=\pi^{\rm et}(\widehat{\A}^2\setminus \{0\})=0$, yet $z^2+xy$ is not smooth.  

Artin asked in \cite{Ar3} whether, if $\pi^{{\rm et}}(U)$ is finite, there is always a finite morphism $ \widehat{\A}^2\to X$. He shows this if $(X,x)$ is a rational double point {\it loc.cit.}.

The purpose of this note is to give an answer to a similar question where one replaces the \'etale fundamental group by the Nori one. Strictly speaking, Nori
in \cite[Chapter~II]{N} defined his fundamental group-scheme for irreducible reduced schemes endowed with a rational point. But as $U$ has no rational point,  one has to modify a tiny bit Nori's construction to make it work. This is done in subsection \ref{ss_loc}. While the \'etale fundamental group of $X$ is trivial, Nori's one isn't. So the right notion of Nori's fundamental group is a relative one denoted by $\pi_{{\rm loc}}(U,X,x)$ (see Lemma \ref{lem2.5}). Roughly speaking, it measures the torsors on $U$ under a finite flat $k$-group-scheme $G$ which do not come by restriction from a torsor on $X$. We show (Theorem \ref{thm4.2}) that if $\pi_{{\rm loc}}^N(U,X,x)$  is finite, then $(X,x)$ is a rational singularity, and if $\pi_{{\rm loc}}^N(U,X,x)=0$, then there is a finite morphism $f: \widehat{\A}^2\to X$.

This note relies on discussions the authors had during the Christmas break 2009/10 in Ivry. They have been written down by H\'el\`ene in the night  when Eckart died, as a despaired sign of love.  

\section{Local Nori Fundamental Groupscheme}
\subsection{Nori's construction}
Let $U$ be a scheme defined over a  field $k$, endowed with a rational point $u\in U(k)$. 
In \cite[Chapter~II]{N} Nori constructed the fundamental group-scheme $\pi^N(U,u)$. Let $\sC(U,u)$ be the following category. The objects   are triples $(h:V\to U, G, v)$ where $G$ is a finite $k$-group-scheme, $h$ is a $G$-principal bundle and $v\in V(k)$ with  $h(v)=u$. Recall \cite[Chapter~I,2.2]{N} that a $G$-principal bundle
$h: V\to U$ is a flat morphism, together with a group action $G\times_k V\xrightarrow{\bullet} V$ such that $V\times_k G\xrightarrow{(1,\bullet)} V\times_U V$ is an isomorphism.  
  Then ${\rm Hom}\big((h_1:V_1\to U, G_1, v_1),(h_2:V_2\to U, G_2, v_2)\big)$ consists of the $U$-morphisms $f: V_1\to V_2$ which are compatible with the principal bundle structure. 

The objects of  the ind-category $\sC^{{\rm ind}}(U,u)$ associated to $\sC(U,u)$ are triples $(h:V\to U, G, v)$ where $G=\varprojlim_\alpha G_\alpha$ is a prosystem of finite $k$-group-schemes $G_\alpha$, $h=\varprojlim_\alpha h_\alpha, h_\alpha: V_\alpha\to U$, is a pro-$G$-principal bundle and $v=\varprojlim_\alpha v_\alpha \in Y(k)$ is a pro-point with  $h(v)=u$. The morphisms are the ind-morphisms $V_1\to V_2$ over $U$ which are compatible with the principal bundle structure and such that $f(v_1)=v_2$. 

Then $(U,u)$ has a fundamental group-scheme $\pi^N(U,u)$, which is then a $k$-profinite group-scheme,  if by definition \cite[Chapter~II, Definition~1]{N} there is a $(\frak{h}:W\to U, \pi^N(U,u),w)\in 
\sC^{\rm ind}(U,u)$ with the property that for any $(h: V\to U, G, v)\in \sC^{\rm ind}(U,u)$, there is a unique map 
$(\frak{h}:W\to U, \pi^N(U,u),w)\to (h: V\to U, G, v)$
 in $\sC^{{\rm ind}}(U,u)$.   

Nori shows \cite[Chapter~II, Lemma~1]{N} that if $G_1, G_2, G_0$ are three finite $k$-group-schemes, $h_i: V_i\to U$ are $G_i$-principal bundles, and $f_i: V_i\to V_0, i=1,2$ are principal bundle $U$-morphisms, then $V_1\times_{V_0} V_2\to Z$ is a principal bundle under $G_1\times_{G_0} G_2$, where $Z\subset U$ is a closed subscheme (no reference to the base point here). Then he shows that $(U,u)$ has a fundamental group-scheme if and only if $Z=U$ for all $(h_i: V_i\to U, G_i, y_i), f_i\in \sC(U,u)$
and he concludes \cite[Chapter~II, Proposition~2]{N} that if $U$ is reduced and irreducible, then $(U,u)$ has a fundamental group-scheme. 
\subsection{Local Nori fundamental group-scheme} \label{ss_loc}
Let $k$ be a field, let $A$ be a complete normal local $k$-algebra with maximal ideal $\frak{m}$ and residue field $k$. We define $X=\Spec A$ and $U=X\setminus \{x\}$, where $x\in X(k)$ is the rational point associated to $\frak{m}$. So in particular, $U(k)=\emptyset$, and we have to slightly modify Nori's construction to define the group-scheme of $U$.

Let $G$ be a finite $k$-group-scheme, and let $h: V\to U$ be a $G$-principal bundle. Recall from \cite[Corollaire~6.3.2, Proposition~6.3.4]{EGA2} that the {\it integral closure} $\tilde{h}: Y\to X$ of $h$ is the {\it unique}  extension $\tilde{h}: Y\to X$ of $h$ such that $Y=\Spec B$, $B$ is the integral closure of  $A$ in $j_*h_*\sO_V$, where $j: U\to X$ is the open embedding. Then $\tilde{h}$ is finite. In particular, if $h_i: V_i\to U$ are principal bundles under the finite $k$-group-schemes $G_i$, and $f: V_1\to V_2$ is a $U$-morphism which respects the principal bundle structures, then it extends uniquely to a $X$-morphism $\tilde{f}: Y_1\to Y_2$, which is then finite.
We can now mimic Nori's construction. 
\begin{defn} \label{defn2.1}
The objects of the category $\sC_{{\rm loc}}(U,x)$ are triples $(h: V\to U, G, y)$ where $G$ is a finite $k$-group-scheme, $y\in Y(k)$ with $\tilde{h}(y)=x$, where $\tilde{h}: Y\to X$ is the integral closure of $h$ . The morphisms ${\rm Hom} \big((h_1: V_1\to U, G_1, y_1) \to (h_2: V_2\to U, G_2, y_2)\big)$ consist of $U$-morphisms $f: V_1\to V_2$ which respect the principal bundle structure and such that $\tilde{f}(y_1)=y_2$.  

The objects of the ind-category $\sC^{{\rm ind}}_{{\rm loc}}(U,x)$ associated to $\sC_{{\rm loc}}(U,x)$ are triples $(h: V\to U, G, y)$ where $G=\varprojlim_\alpha G_\alpha$ is a pro-system of finite $k$-group-schemes,  
 $h=\varprojlim_\alpha h_\alpha, h_\alpha: V_\alpha\to U$, is a pro-$G$-principal bundle, and $y=\varprojlim_\alpha y_\alpha \in \varprojlim_\alpha Y_\alpha(k)$ is a pro-point in the integral closure of $V_\alpha$ mapping to $x$. 

One says that $(U,x)$ has a {\it local fundamental group-scheme} $\pi^N_{{\rm loc}}(U,x)$, which is then a $k$-profinite group-scheme,  if  there is a $(\frak{h}:W\to U, \pi^N_{{\rm loc}}(U,x),z)\in 
\sC^{\rm ind}_{{\rm loc}}(U,x)$ with the property that for any $(h: V\to U, G, v)\in \sC^{\rm ind}_{{\rm loc}}(U,x)$, there is a unique map 
$(\frak{h}:W\to U, \pi^N_{{\rm loc}}(U,x),z)\to (h: V\to U, G, y)$
 in $\sC^{{\rm ind}}_{{\rm loc}}(U,x)$.

\end{defn}
\begin{prop} \label{prop2.3}
 If $X$ is reduced and irreducible, then $(U,x)$ has a {\it local fundamental group-scheme} $\pi^N_{{\rm loc}}(U,x)$.
\end{prop}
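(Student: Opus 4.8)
The plan is to follow Nori's strategy verbatim, replacing his use of base points in $U$ by base points in the integral closures over $X$, and to reduce the existence statement to the single condition that a certain closed subscheme $Z\subset X$ coincides with all of $X$. Concretely, given two objects $(h_i:V_i\to U,G_i,y_i)$ in $\sC_{\rm loc}(U,x)$ together with morphisms $f_i:V_i\to V_0$ to a common object $(h_0:V_0\to U,G_0,y_0)$, Nori's \cite[Chapter~II, Lemma~1]{N} produces a $G_1\times_{G_0}G_2$-principal bundle $V_1\times_{V_0}V_2\to Z$ over a closed subscheme $Z\subset U$. Taking integral closures, $\widetilde{V_1\times_{V_0}V_2}\to \widetilde Z$ is finite over $X$, and the key point is that the three pro-points $y_1,y_2,y_0$, compatible under $\tilde f_1,\tilde f_2$, determine a pro-point of the fibre product lying over $x$; hence $x\in\widetilde Z$. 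Since $X$ is irreducible, $U$ is irreducible, so $Z=U$, whence $\widetilde Z=X$, and the fibre product lies in $\sC_{\rm loc}(U,x)$.

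From here I would argue exactly as in \cite[Chapter~II, Proposition~2]{N}: the category $\sC_{\rm loc}(U,x)$ is cofiltered, because any two objects admit a common refinement (take the product of the bundles, which maps to both and, by the previous paragraph together with the base-point compatibility, again lies in the category), and every object has only finitely many quotients up to isomorphism since $G$ is finite. Therefore the pro-system indexed by all of $\sC_{\rm loc}(U,x)$ has an inverse limit $(\mathfrak h:W\to U,\ \varprojlim G_\alpha,\ z)$ in $\sC^{\rm ind}_{\rm loc}(U,x)$, and by construction it maps uniquely to every object of $\sC_{\rm loc}(U,x)$, hence to every object of the ind-category. Setting $\pi^N_{\rm loc}(U,x)=\varprojlim_\alpha G_\alpha$, this limit object has the required universal property, so $(U,x)$ has a local fundamental group-scheme.

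Two points need care beyond Nori's original argument. First, one must check that the integral-closure operation is compatible with the fibre products used above, i.e. that the integral closure of $V_1\times_{V_0}V_2\to U$ over $X$ is $Y_1\times_{Y_0}Y_2$ (or at least maps there dominantly with the right base point); this follows from the uniqueness in \cite[Corollaire~6.3.2, Proposition~6.3.4]{EGA2} together with the fact that $A$ is normal, so that $Y_1\times_{Y_0}Y_2$, being finite over the normal scheme $X$ and generically equal to the fibre product of bundles, has the same integral closure. Second, one must verify that a compatible family of base points $y_i\in Y_i(k)$ over $x$ genuinely lifts to a $k$-point of the relevant fibre product over $x$; here one uses that all residue fields in sight equal $k$, so the fibre over $x$ of $Y_1\times_{Y_0}Y_2$ contains the point $(y_1,y_2)$, which maps to $y_0$ by the morphism compatibility $\tilde f_i(y_i)=y_0$. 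The main obstacle is precisely this interplay between taking integral closures and forming fibre products while keeping track of the base point over $x$; once that is settled, the cofiltered-limit formalism of Nori applies without change.
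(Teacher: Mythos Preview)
Your overall strategy --- verify that fibre products remain in $\sC_{\rm loc}(U,x)$ and then pass to the inverse limit over the resulting cofiltered category, exactly as in Nori --- matches the paper's. The paper's proof is essentially a one-line citation of Nori: since $X$ is reduced and irreducible, so is its open subscheme $U$, and Nori's argument then yields $Z=U$; the point $y_1\times_{y_0}y_2$ supplies the required base point in the integral closure of the fibre product.

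The gap is in your justification of $Z=U$. You argue that the compatible $y_i$ give a $k$-point of $Y_1\times_{Y_0}Y_2$ over $x$, ``hence $x\in\widetilde Z$'', and deduce $Z=U$ from irreducibility. But (i) $\widetilde Z$, read as an integral closure, is undefined until you already know $Z=U$, since integral closure requires a dominant morphism to a normal base; and (ii) read instead as the closure $\overline Z\subset X$, the implication fails: $X=\Spec A$ is local with closed point $x$, so \emph{every} nonempty closed subset of $X$ already contains $x$, and ``$x\in\overline Z$'' says only $Z\neq\emptyset$. Nothing rules out $Z$ being, say, a curve in $U$ whose closure passes through $x$. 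Likewise, the point $(y_1,y_2)\in Y_1\times_{Y_0}Y_2$ could a priori lie on a component supported entirely over $x$, so its existence does not by itself force $V_1\times_{V_0}V_2\neq\emptyset$. Your subsequent claim that the integral closure of the fibre product equals $Y_1\times_{Y_0}Y_2$ because it is ``generically equal to the fibre product of bundles'' is circular for the same reason: ``generically equal'' presupposes $Z=U$.

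The paper sidesteps all of this by appealing directly to the fact that $U$ is reduced and irreducible --- precisely Nori's hypothesis in \cite[Chapter~II, Proposition~2]{N} --- and invoking his argument verbatim to get $Z=U$; the base points $y_i$ over $x$ are used only afterwards, to equip the already-established torsor $V_1\times_{V_0}V_2\to U$ with the point $y_1\times_{y_0}y_2$ in its integral closure, not to locate $Z$.
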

\begin{proof}
 As explained above, the condition on $X$ implies that if
$f_i: (h_i: V_i\to U, G_i, y_i)\to (h_0: V_0\to U, G_0, y_0)$ is a morphism in $\sC_{{\rm loc}}(U,x)$, then $(V_1\times_{V_0} V_2\to U, G_1\times_{G_0} G_2, y_1\times_{y_0} y_2)\in \sC_{{\rm loc}}(U,x)$, so 
as in \cite[Chapter~II,p.87]{N}, the prosystem $\varprojlim_\alpha  (h_\alpha: V_\alpha\to U, G_\alpha, y_\alpha)$ over all objects $ (h_\alpha: V_\alpha\to U, G_\alpha, y_\alpha) $ of $\sC_{{\rm loc}}(U,x)$ is well defined. So $\pi^N_{{\rm loc}}(U,x)=\varprojlim_\alpha G_\alpha$.
\end{proof}
There is a restriction functor $\rho: \sC(X,x)\to \sC_{{\rm loc}}(U,x)$ which sends $(h: Y\to X, G, y)$ to its restriction $(h_U: Y\times_X U\to U, G, y)$, as the integral closure of $X$ in $Y\times_X U$ is $Y$. This defines the $k$-group-scheme homomorphism $$\rho_*: \pi^N_{{\rm loc}}(U,x)\to \pi^N(X,x).$$ 
\begin{prop} \label{prop2.3}
 The homomorphism $\rho$ is faithfully flat. 
\end{prop}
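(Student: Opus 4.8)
The plan is to use Nori's Tannakian dictionary: by \cite[Chapter~II]{N}, a homomorphism of profinite group-schemes $\rho_*:\pi^N_{{\rm loc}}(U,x)\to\pi^N(X,x)$ is faithfully flat precisely when the associated restriction functor $\rho$ on the categories of torsors (equivalently, on the associated Tannakian categories of finite vector bundles) is \emph{fully faithful} and \emph{closed under subobjects}. Concretely, for the faithful-flatness of $\rho_*$ it suffices to check two things about $\rho:\sC(X,x)\to\sC_{{\rm loc}}(U,x)$: first, that if $(h:Y\to X,G,y)\in\sC(X,x)$ becomes trivial after restriction to $U$, then it was already trivial on $X$ (injectivity on connected components / no kernel at the level of "the torsor split"); and second, that every sub-group-scheme quotient realized on $U$ of a restricted torsor is itself the restriction of a torsor on $X$ (stability under sub-objects). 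I would phrase both in torsor language to avoid having to set up the full Tannakian formalism for the pointless scheme $U$.

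First I would prove the "no kernel" statement. Suppose $(h:Y\to X,G,y)\in\sC(X,x)$ is such that $h_U:Y\times_X U\to U$ admits a section $s:U\to Y\times_X U$. Since $A$ is normal and $X=\Spec A$ is the integral closure of $X$ in $Y\times_X U$ restricted over $U$... more precisely, $Y$ is the integral closure of $X$ in $h_{U*}\sO_{Y\times_X U}$ by the very construction recalled before Definition~\ref{defn2.1}, and a section $s$ over $U$ identifies $Y\times_X U$ with $G\times_k U$, hence $h_{U*}\sO_{Y\times_X U}=\sO_U\otimes_k\sO(G)$. Taking integral closures in $X$, and using that $X$ is normal of dimension $\ge 2$ so that $j_*\sO_U=\sO_X$ (here $j:U\hookrightarrow X$; this is where normality and $\dim\ge 2$, hence $\mathrm{codim}\{x\}\ge 2$, enter), I get $Y\cong X\times_k G$ as $G$-torsors over $X$, i.e. the original torsor was trivial. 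So $\rho$ kills nothing.

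Second, the stability under sub-objects. Given $(h:Y\to X,G,y)\in\sC(X,x)$ and a quotient $G\twoheadrightarrow Q$ with $q:\bar V\to U$ the induced $Q$-torsor on $U$, namely $\bar V=(Y\times_X U)/\ker(G\to Q)$, I must show $q$ extends to a $Q$-torsor on all of $X$. But $\bar V$ is an open subscheme of the integral closure $\bar Y$ of $X$ in $q_*\sO_{\bar V}$, and $\bar Y\to X$ is finite; the $Q$-action on $\bar V$ extends to $\bar Y$ by integrality/uniqueness (as noted before Definition~\ref{defn2.1}, $U$-morphisms extend uniquely to $X$-morphisms of integral closures, applied to the action and multiplication maps). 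The point is then that $\bar Y\to X$ is flat and a $Q$-torsor, not merely over $U$: flatness over the normal surface $X$ can fail only along a subset of codimension $\ge 2$, i.e. only at $x$, and I would rule out bad behaviour at $x$ by descent — $\bar Y\times_X Y$ is a $Q$-torsor over $Y$ (being the quotient of $Y\times_X Y=Y\times_k G$ by $\ker(G\to Q)$, which is visibly $Y\times_k Q$, hence flat over $Y$), and faithfully flat descent along the finite flat $Y\to X$ then forces $\bar Y\to X$ to be a $Q$-torsor. Thus $(\bar Y\to X,Q)\in\sC(X,x)$ restricts to $q$, with the base point $\bar y\in\bar Y(k)$ the image of $y$.

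The main obstacle I anticipate is the second step: one must be careful that the base point of $\bar Y$ is a genuine $k$-rational point in the fibre over $x$ (needed for membership in $\sC(X,x)$) and that the Tannakian reformulation "fully faithful $+$ closed under subobjects $\Rightarrow$ faithfully flat" is legitimate here — for $\sC(X,x)$ this is exactly \cite[Chapter~II]{N}, but $\sC_{{\rm loc}}(U,x)$ is the modified category of Definition~\ref{defn2.1}, so I would either transport everything through the integral-closure equivalence $\sC_{{\rm loc}}(U,x)\simeq$ (torsors on $X$ that are torsors only away from $x$), or invoke the general criterion of \cite[Chapter~II, \S3]{N} that a morphism of fundamental group-schemes is faithfully flat iff the pullback functor is fully faithful and every subobject of a pulled-back object is a pulled-back object. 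Everything else — extending actions by integrality, the codimension-$2$ purity/Hartogs argument using normality of $A$, and faithfully flat descent — is routine.
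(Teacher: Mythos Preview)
Your overall strategy is sound and reaches the same core lemma as the paper: for a $G$-torsor $Y\to X$ and a surjection $G\twoheadrightarrow H$ with kernel $K$, the $H$-torsor $Y_U/K$ on $U$ extends to an $H$-torsor on $X$. But your route to this lemma is genuinely different. The paper sets $Z=Y/K$ and shows directly that $Y\to Z$ is a $K$-torsor by analysing the closed immersion $Y\times_Z Y\hookrightarrow Y\times_X Y=Y\times_k G$: over $U$ this is $Y_U\times_k K\subset Y_U\times_k G$, so $Y\times_Z Y$ contains the closure $Y\times_k K$, and any further component would lie over the closed point and hence be $0$-dimensional, forcing $Y\times_Z Y=Y\times_k K$. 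You instead argue by fpqc descent along the finite flat $Y\to X$: after base change, $Z\times_X Y=(Y\times_X Y)/K=(Y\times_k G)/K=Y\times_k H$ is a trivial $H$-torsor over $Y$, hence $Z\to X$ is an $H$-torsor. This descent argument is clean and avoids the connected-components bookkeeping.

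There is, however, a gap in your write-up. You define $\bar Y$ as the \emph{integral closure} of $X$ in $q_*\sO_{\bar V}$, but then compute $\bar Y\times_X Y$ as ``the quotient of $Y\times_X Y$ by $\ker(G\to Q)$'', which presumes $\bar Y=Y/K$. When $G$ is not \'etale, $Y$ need not be reduced, so $Y/K$ need not be normal and hence need not coincide with the integral closure; and integral closure does not commute with the (merely flat, not smooth) base change $Y\to X$. The fix is immediate: take $Z=Y/K$ from the start, exactly as the paper does. Then $(Y/K)\times_X Y=(Y\times_X Y)/K$ because quotients by finite flat group schemes commute with flat base change, and your descent argument applies verbatim to $Z$. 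The restriction $Z|_U=\bar V$ is what you need, and the image of $y$ gives the required $k$-point over $x$. Your separate ``no kernel'' step is correct but unnecessary here: full faithfulness of $\rho$ is already built into the setup, since $U$-morphisms of torsors extend uniquely to $X$-morphisms of their integral closures (as recalled before Definition~\ref{defn2.1}); this is why the paper checks only the subobject condition.
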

\begin{proof}
 Faithful flatness of $\rho$ means that if 
$(h: Y\to X, G, y)\in \sC(X,x)$ is such that $(Y_U\to, G, y) \to (U, \{1\},x)$ factors through $(\ell: V\to U, H, y) \in \sC_{{\rm loc}}(U,x)$, where $Y_U=Y\times_X U$, then
necessarily $(\ell: V\to U, H, y) =\rho(\ell_X: Z\to X, H, y)$ for some $(\ell_X: Z\to X, H, y) \in \sC(X,x)$.
  Let $K={\rm Ker}(G\to H)$. Since $K$ is a $k$-subgroup-scheme of $G$, it acts on $Y$. We define $Z$ to be $Y/K$.  By definition, $Z_U=V$. The compositum $h: Y\to Z\to X$ is a $G$-torsor. 
The  embedding $Y\times_Z Y\subset Y\times_X Y$ is closed, and while restricted to $U$, it is described as $Y_U \times_k K\subset Y_U\times_k G$. Thus $Y\times_ZY$ contains the closure  of $Y_U\times_k K$ in $Y\times_k G$, that is $Y\times_k K$.  Thus $Y\times_k K$ consists of connected components of $Y\times_Z Y$ and moreover, if there is another connected component, it lies in $\{y\}\times_Z Y=\Spec k$. Thus $Y\times_Z Y\cong_k Y\times_k K$ and $Y\to Z$ is a $K$-torsor. This finishes the proof.

\end{proof}
We denote by $\pi^{{\rm et}}(U,x)$ the \'etale proquotient of $\pi^N_{{\rm loc}}(U,x)$. From now on, we assume 
$k=\bar k$. Then $\pi^{{\rm et}}(U,x)$ is identified with $\pi^{\rm et}(U,\eta)$ where $\eta\to U$ is a geometric generic point and $\pi^{\rm et}(U,\eta)$ is Grothendieck's \'etale fundamental group. The \'etale proquotient of $\pi^N(X,x)$ is identified with Grothendieck's fundamental group based at $x$, and is trivial by Hensel's lemma, as $A$ is complete. If $\ell$ is a prime number (including $p$), we denote by $\pi^{{\rm et, ab,}\ell}(U,x)$ the maximal pro-$\ell$-abelian quotient of $\pi^{{\rm et}}(U,x)$. 
\begin{defn}
 One defines $\pi^N_{\rm loc}(U,X,x)={\rm Ker}\big( \pi^N_{{\rm loc}}(U,x)\xrightarrow{\rho} \pi^N(X,x)\big)$.
\end{defn}
From the discussion, we see
\begin{lem} \label{lem2.5}
  The compositum  $\pi^N_{{\rm loc}}(U,X,x)\to \pi^{\rm et}(U,x)$ is surjective. In particular, if $\pi^N_{{\rm loc}}(U,X,x)$ is a finite $k$-group-scheme, $\pi^{\rm et}(U,x)$ is a finite group.
\end{lem}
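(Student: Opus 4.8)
The plan is to read the statement off the tower of faithfully flat homomorphisms relating the three group-schemes. By construction $\pi^N_{\rm loc}(U,X,x)$ is the kernel of $\rho_*\colon\pi^N_{\rm loc}(U,x)\to\pi^N(X,x)$, and by the faithful flatness of $\rho$ proved above $\rho_*$ is surjective, so one has a short exact sequence of profinite $k$-group-schemes
$$1\longrightarrow\pi^N_{\rm loc}(U,X,x)\longrightarrow\pi^N_{\rm loc}(U,x)\xrightarrow{\ \rho_*\ }\pi^N(X,x)\longrightarrow 1 .$$
Let $q\colon\pi^N_{\rm loc}(U,x)\twoheadrightarrow\pi^{\rm et}(U,x)$ be the canonical faithfully flat homomorphism onto the \'etale pro-quotient; the composition in the statement is $q$ restricted to the subgroup-scheme $\pi^N_{\rm loc}(U,X,x)$.

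First I would let $N\subseteq\pi^{\rm et}(U,x)$ be the scheme-theoretic image of $\pi^N_{\rm loc}(U,X,x)$ under $q$. Since $\pi^N_{\rm loc}(U,X,x)$ is normal and $q$ is surjective, $N$ is a closed normal subgroup-scheme, and the assertion is exactly that $N=\pi^{\rm et}(U,x)$. Set $Q:=\pi^{\rm et}(U,x)/N$. The crux is that $Q$ admits two a priori conflicting descriptions: on the one hand $Q$ is a quotient of $\pi^{\rm et}(U,x)$, hence pro-\'etale; on the other hand the composite $\pi^N_{\rm loc}(U,x)\xrightarrow{q}\pi^{\rm et}(U,x)\to Q$ is faithfully flat and annihilates $\pi^N_{\rm loc}(U,X,x)=\ker(\rho_*)$, so it factors as $\pi^N_{\rm loc}(U,x)\xrightarrow{\rho_*}\pi^N(X,x)\twoheadrightarrow Q$, exhibiting $Q$ as a quotient of $\pi^N(X,x)$. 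Thus $Q$ is a pro-\'etale quotient of $\pi^N(X,x)$, so the surjection $\pi^N(X,x)\twoheadrightarrow Q$ factors through the maximal pro-\'etale quotient of $\pi^N(X,x)$; but that quotient is trivial by Hensel's lemma since $A$ is complete, as recalled just before the definition of $\pi^N_{\rm loc}(U,X,x)$. Hence $Q=1$, i.e. $N=\pi^{\rm et}(U,x)$, which is the claimed surjectivity.

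For the final clause I would note that the faithfully flat image of a finite $k$-group-scheme is a finite $k$-group-scheme, so finiteness of $\pi^N_{\rm loc}(U,X,x)$ forces $\pi^{\rm et}(U,x)$ to be finite; being also pro-\'etale it is finite \'etale over $k$, hence---as $k=\bar k$---a finite abstract group.

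I expect the only delicate part to be the bookkeeping with quotients of pro-finite group-schemes: one must make sure that ``scheme-theoretic image'' and ``quotient by a closed normal subgroup-scheme'' behave as expected (by passing to the finite levels $G_\alpha$), and that a quotient of a pro-\'etale $k$-group-scheme is again pro-\'etale. The latter reduces to the statement that the quotient of a finite \'etale $k$-group-scheme by a closed normal subgroup-scheme is finite \'etale, which one checks after base change to $\bar k$, where every finite \'etale group-scheme is constant. These points are routine; the whole content of the proof is the two-sided description of $Q$.
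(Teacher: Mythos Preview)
Your argument is correct and is precisely the content the paper intends: the lemma is stated with the preface ``From the discussion, we see'', meaning it is read off from the exact sequence $1\to\pi^N_{\rm loc}(U,X,x)\to\pi^N_{\rm loc}(U,x)\xrightarrow{\rho_*}\pi^N(X,x)\to 1$ together with the remark that the \'etale proquotient of $\pi^N(X,x)$ is trivial by Hensel's lemma. You have simply written out in full the factorization-through-$Q$ argument that the paper leaves implicit, so there is nothing to add.
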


\section{Construction and elementary properties of the Picard scheme for surface singularities} 
 Let $k$ be a field, perfect if of characteristic $p>0$, let $A$ be a complete normal local $k$-algebra with maximal ideal $\frak{m}$,  $X=\Spec A$ and $U=X\setminus \{x\}$, where $x\in X(k)$ is the rational point associated to $\frak{m}$.
In \cite[Expos\'e~XIII,Section~5]{SGA2} Grothendieck initiated the construction of a pro-system of  locally algebraic $k$-group-schemes $G_n$ and a canonical isomomorphism $G(k)=\Pic(U)$ with $G(k)=\varprojlim_n G_n(k)$. This construction is performed in \cite{Lip} (see overview in \cite[p.~273]{Kl}) and relies on Mumford's basic idea \cite[Section~2]{Mum} to use a desingularization of $X$, if it exists, so in characteristic $0$ or if ${\rm dim}_k X\le 2$ if $k$ has characterisistic $p>0$. We now summarize  the construction and the elementary properties under the assumptions
\begin{itemize}
 \item[1)] $X$ is normal
\item[2)] ${\rm dim}_k X=2.$
\end{itemize}
Let $\sigma: \tilde{X} \to X$ be a  desingularization such that $\sigma^{-1}(x)_{{\rm red}}=\cup_i D_i$ is a strict normal crossings divisor and all components $D_i$ are $k$-rational. There is linear combination $D=\sum_i m_iD_i$ with all $m_i\ge 1$ such that $\sO_{\tilde{X}}(-D)$ is relatively ample. 
We define $\tilde{X}_n$ to be scheme $\cup_i D_i$ with structure sheaf $\sO_{\tilde{X}}/\sO_{\tilde{X}}(-(n+1)D)$, so $\tilde{X}_0=D$, and we also define $D_{{\rm red}}$ with structure sheaf $\sO_{\tilde{X}}/\sO_{\tilde{X}}(-\sum_i D_i)$.  
 Then the functors $\sP ic(\tilde{X}_n/k)$ and $\sP ic(D_{ {\rm red}}/k)$, taken as a Zariski, an \'etale or a fppf functor, are representable by locally algebraic $k$-group-schemes $\Pic(\tilde{X}_n/k)$ and $\Pic(D_{{\rm red}}/k)$, so $\Pic(\tilde{X}_n)=\Pic(\tilde{X}_n/k)(k), \ \Pic(D_{ {\rm red}})=\Pic(D_{ {\rm red}}/k)(k)$ (see \cite[p.~273]{Kl}, \cite[Theorem~1.2]{Lip}). On the other hand, for all $n\ge 0$, and all $k$-algebras $R$, one has $\Pic(\tilde{X}_n\otimes_k R)=H^1(\tilde{X}_n\otimes_k R, \sO^\times)$.  As the relative dimension of $\sigma$ is $1$, this implies that the transition homomorphisms $\Pic(\tilde{X}_{n+1})\to \Pic(\tilde{X}_n)\to \Pic(\tilde{X}_0)\to \Pic(D_{ {\rm red}})$ are all surjective, and that ${\rm Ker}\big( \Pic(\tilde{X}_{n+1})\to \Pic(\tilde{X}_n)\big)=H^1\big(\tilde{X}_0, \sO_{\tilde{X}_0}(-(n+1)D)\big)$. Since $-D$ is a relatively ample divisor on $\tilde{X}$, there is a $n_0\ge 0$ such that the transition homomorphisms $\Pic(\tilde{X}_n) \to \Pic( \tilde{X}_{n_0})$ are all constant for $n\ge n_0$. Since the $1$-component $\Pic^0(D_{{\rm red}})$
of  $\Pic(D_{{\rm red}})$ is a semi-abelian variety, so in particular smooth, and the fibers $\Pic(\tilde{X}_n) \to \Pic(D_{{\rm red}})$ are affine \cite[p.~9,Corollaire]{Oo},  
 $\Pic( \tilde{X}_{n_0})$ is smooth. One defines 
\ga{3.1}{\Pic(\tilde{X})= \Pic( \tilde{X}_{n_0}).}
 It is thus a locally algebraic smooth $k$-group-scheme. 
It is an extension of $\oplus_i \Z[D_i]$ by its $1$-component.
Its $1$-component $\Pic^0(\tilde{X})\subset \Pic(\tilde{X}) $ is an extension of a semiabelian variety by smooth, connected commutative unipotent algebraic group over $k$.

Let $\langle D\rangle\subset \Pic(\tilde{X})$ be the subgroup-scheme  spanned by those divisors with support in $D$. (In fact, $\langle D\rangle$ injects into $\Pic(D_{ {\rm red}})$ via the surjection $\Pic(\tilde{X})\to \Pic(D_{ {\rm red}})$).
 It is a discrete subgroup-scheme. One sets 
\ga{3.2}{\Pic(U)=\Pic(\tilde{X})/\langle D\rangle.} 
The Zariski tangent space at $1$  is 
\ga{3.3}{H^1(\tilde{X}, \sO_{\tilde{X}})=H^1(\tilde{X}_n, \sO_{\tilde{X}_n})= {\rm Ker}\big(\Pic(\tilde{X}_n[\epsilon])\to \Pic(\tilde{X}_n)    \big)} for $n\ge n_0$, where $\tilde{X}_n[\epsilon]:=\tilde{X}_n\times_k k[\epsilon]/(\epsilon^2)$.  Since $\Pic(\tilde{X})$ is smooth,  
\ga{3.4}{{\rm dim}_kH^1(\tilde{X}, \sO_{\tilde{X}})={\rm dim} \ \Pic^0(\tilde{X})=\Pic^0(U).} 
The last equality comes from the fact that $\langle D\rangle\subset \Pic(\tilde{X})$ is a discrete \'etale subgroup.

Recall that the surface singularity $(X,x)$ is said to be {\it rational} if $H^1(\tilde{X}, \sO_{\tilde{X}})=0$. The definition does not depend on the choice of the resolution $\sigma: \tilde{X}\to X$ of singularities of $(X,x)$. 

One has
\begin{lem} \label{lem3.1}
The following conditions are equivalent.
\begin{itemize}
\item[1)] 
The surface singularity $(X,x)$ is rational. 
\item[2)] $\Pic^0(\tilde{X})=0$.
\item[3)] $\Pic(U)$ is finite. 
\end{itemize}
\end{lem}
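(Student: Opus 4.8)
The plan is to deduce all three equivalences from the structure of $\Pic(\tilde X)$ recorded above, together with the one genuinely geometric input needed here: the exceptional intersection matrix $(D_i\cdot D_j)_{i,j}$ of a resolution of a normal surface singularity is negative definite (Mumford \cite{Mum}), hence nonsingular.

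First I would treat $(1)\Leftrightarrow(2)$, which is immediate from \eqref{3.4}: it gives $\dim_k H^1(\tilde X,\sO_{\tilde X})=\dim\Pic^0(\tilde X)$, and since $\Pic^0(\tilde X)$ is a \emph{smooth connected} $k$-group-scheme it is trivial if and only if it is zero-dimensional. Hence $(X,x)$ is rational, i.e.\ $H^1(\tilde X,\sO_{\tilde X})=0$, exactly when $\Pic^0(\tilde X)=0$.

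For $(2)\Leftrightarrow(3)$ I would analyse the quotient \eqref{3.2}. The surjection $\Pic(\tilde X)\surj\bigoplus_i\Z[D_i]$ of the displayed extension is the multidegree on the components $D_i$, so the composite $\langle D\rangle\inj\Pic(\tilde X)\surj\bigoplus_i\Z[D_i]$ sends the class of $\sO_{\tilde X}(\sum_j a_jD_j)$ to $\big(\sum_j a_j(D_i\cdot D_j)\big)_i$; that is, it is represented by the intersection matrix. Negative definiteness makes this matrix invertible over $\Q$, so the composite is injective with finite cokernel $C$. In particular $\langle D\rangle$ is free of rank equal to the number of the curves $D_i$, and $\langle D\rangle\cap\Pic^0(\tilde X)=0$ inside $\Pic(\tilde X)$ (an element of the intersection has zero multidegree, hence vanishes). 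Because $\langle D\rangle$ is étale and meets $\Pic^0(\tilde X)$ trivially, $\Pic^0(\tilde X)$ maps isomorphically onto an open subgroup-scheme of $\Pic(U)=\Pic(\tilde X)/\langle D\rangle$ — this is $\Pic^0(U)$, consistently with \eqref{3.4} — while the component group is $\Pic(U)/\Pic^0(\tilde X)\cong\big(\bigoplus_i\Z[D_i]\big)/\overline{\langle D\rangle}=C$, which is finite. Thus $\Pic(U)$ is an extension of the finite group $C$ by $\Pic^0(\tilde X)$, so it is finite precisely when $\Pic^0(\tilde X)$ is finite, i.e.\ (being smooth and connected) precisely when $\Pic^0(\tilde X)=0$. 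This gives $(2)\Leftrightarrow(3)$ and closes the loop $(1)\Leftrightarrow(2)\Leftrightarrow(3)$.

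I do not expect a serious obstacle: the argument is essentially bookkeeping around the extension $0\to\Pic^0(\tilde X)\to\Pic(\tilde X)\to\bigoplus_i\Z[D_i]\to0$. The only place where real content enters is the non-degeneracy of the exceptional intersection form, which is exactly what forces both the injectivity of $\langle D\rangle\to\bigoplus_i\Z[D_i]$ and the finiteness of its cokernel; a minor point to handle carefully is identifying the identity component of the quotient group-scheme $\Pic(\tilde X)/\langle D\rangle$ with $\Pic^0(\tilde X)$, which again rests on $\langle D\rangle$ being étale and meeting $\Pic^0(\tilde X)$ trivially.
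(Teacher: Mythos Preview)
Your proof is correct and follows essentially the same approach as the paper: both obtain $(1)\Leftrightarrow(2)$ directly from \eqref{3.4}, and for $(2)\Leftrightarrow(3)$ both use that $\langle D\rangle$ is discrete (so $\Pic^0(\tilde X)$ survives in the quotient) together with negative definiteness of the intersection matrix (so modding out by $\langle D\rangle$ leaves only a finite component group). Your packaging of $(2)\Leftrightarrow(3)$ as the single structural statement ``$\Pic(U)$ is an extension of a finite group by $\Pic^0(\tilde X)$'' is slightly cleaner than the paper's separate treatment of the two implications, but the content is identical.
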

\begin{proof}
 The equivalence of 1) and 2) is given by \eqref{3.4}. As $\langle D\rangle \subset \Pic(\tilde{X})$ is discrete, the definition \eqref{3.2} shows that 3) implies 2). Vice-versa, assume 2) holds. Then $\Pic(\tilde{X})$ is a discrete group of finite type. Let $L\in \Pic(\tilde{X})$. Since the intersection matrix $(D_i\cdot D_j)$ is negative definite (but not necessarily unimodular), there is a $m\in \N\setminus \{0\}$ such that $L^{\otimes m}\in \langle D\rangle \subset \Pic(\tilde{X})$. Thus any $L\in \Pic(\tilde{X})$ has finite order in $\Pic(U)$. Since $\Pic(\tilde{X})$ is of finite type, this shows 3). 
 
\end{proof}
\section{The Theorems}
Throughout this section, we assume  $k$ to be a field, perfect if of characteristic $p>0$,  $A$ to be a complete normal local $k$-algebra with maximal ideal $\frak{m}$, of Krull dimension $2$ over $k$. We set   $X=\Spec A$,  $U=X\setminus \{x\}$, where $x\in X(k)$ is the rational point associated to $\frak{m}$. We say $(X,x)$ is a {\it surface singularity} over $k$. We denote by $\sigma: \tilde{X}\to X$ a desingularization such that $\sigma^{-1}(x)_{{\rm red}}=\cup_i D_i$ is a strict normal crossings divisor. 
We define $H^i(Z,\Z_\ell(1)):=\varprojlim_n H^i(Z, \mu_{\ell^n})$ for a $k$-scheme $Z$. 
\begin{thm} \label{thm4.1}
 Let $(X,x)$ be a surface singularity over an algebraically closed field $k$.  The following conditions are equivalent
\begin{itemize}
\item[1)] $H^1(\tilde{X},\Z_\ell(1))=0$.
\item[2)] $H^1(\tilde{U}, \Z_\ell(1))=0$. 
 \item[3)] There is a prime number $\ell$,   different from $p$ if ${\rm char}(k) =p>0$, such that $\pi^{\rm et,ab,\ell}(U,x)$ is finite.
\item[4)] For all prime numbers $\ell$,  $\pi^{\rm et,ab,\ell}(U,x)$ is finite and if ${\rm char}(k)=p>0$, then $\pi^{\rm et,ab,\ell}(U,x)=0$.
\item[5)] $\Pic^0(\tilde{X})=\Pic^0(U)$ is a smooth, connected commutative unipotent algebraic group-scheme over $k$.  
\item[6)] $D$ is a tree of $\mathbb{P}^1$s. 
\item[7)] $\Pic^0(D_{{\rm red}})=0$.
\end{itemize}
\end{thm}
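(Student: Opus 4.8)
The plan is to reduce all seven conditions to the single statement $\Pic^0(D_{{\rm red}})=0$, using Kummer theory, the description of the Picard schemes recalled in Section~3, and (for the characteristic-$p$ refinement in 4)) Artin--Schreier theory. Two preliminary remarks: since $\sigma$ is an isomorphism over $U$, the scheme $\tilde U:=\sigma^{-1}(U)$ is isomorphic to $U$, so $H^i(\tilde U,-)=H^i(U,-)$ throughout; and since $\sigma_*\sO_{\tilde X}=\sO_X$ with $X$ normal of dimension $2$, one has $H^0(\tilde X,\sO_{\tilde X})^\times=H^0(\tilde U,\sO_{\tilde U})^\times=A^\times=k^\times\times(1+\frak m)$, which is $\ell$-divisible for every prime $\ell\neq p$ because $k=\bar k$ and $A$ is complete.

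Now fix $\ell\neq p$. The Kummer sequence together with this $\ell$-divisibility gives $H^1(\tilde X,\mu_{\ell^n})=\Pic(\tilde X)[\ell^n]$ and $H^1(\tilde U,\mu_{\ell^n})=\Pic(U)[\ell^n]$, hence $H^1(\tilde X,\Z_\ell(1))=T_\ell\Pic(\tilde X)$ and $H^1(\tilde U,\Z_\ell(1))=T_\ell\Pic(U)$. As $\Pic(\tilde X)/\Pic^0(\tilde X)=\bigoplus_i\Z[D_i]$ is free, $\Pic^0(U)=\Pic^0(\tilde X)$, and $\Pic(U)/\Pic^0(U)$ is finitely generated, both Tate modules equal $T_\ell\Pic^0(\tilde X)$, which already gives 1)$\Leftrightarrow$2). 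By the construction in Section~3, $\Pic^0(\tilde X)$ is an extension of the semiabelian variety $\Pic^0(D_{{\rm red}})$ by a smooth connected commutative unipotent group (an iterated extension of the vector groups $H^1$ of coherent sheaves on $D$), so $\Pic^0(D_{{\rm red}})$ is its maximal semiabelian quotient; therefore $T_\ell\Pic^0(\tilde X)=T_\ell\Pic^0(D_{{\rm red}})$, which vanishes exactly when $\Pic^0(D_{{\rm red}})=0$. This proves 1)$\Leftrightarrow$2)$\Leftrightarrow$7), and the same extension — together with the always valid fact that $\Pic^0(U)=\Pic^0(\tilde X)$ is smooth connected commutative — shows that 5) holds precisely when that semiabelian quotient is trivial, i.e.\ 5)$\Leftrightarrow$7).

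For 6)$\Leftrightarrow$7): $D_{{\rm red}}=\bigcup_iD_i$ is a connected nodal proper curve over $\bar k$, so $\Pic^0(D_{{\rm red}})$ is a semiabelian variety, an extension $0\to T\to\Pic^0(D_{{\rm red}})\to\prod_i\Pic^0(D_i)\to0$ with $T$ a torus of rank $b_1(\Gamma)$ for $\Gamma$ the dual graph; since the $D_i$ are smooth, $\Pic^0(D_{{\rm red}})=0$ iff every $D_i\cong\PP^1$ and $\Gamma$ is a tree. For 3)$\Leftrightarrow$2) and the prime-to-$p$ content of 4): the Gysin sequence for the SNC divisor $D_{{\rm red}}\subset\tilde X$ together with proper base change along $\sigma$ shows that $H^1(\tilde U,\Z/\ell)$ is finite, so $\pi^{{\rm et,ab},\ell}(U,x)$ is a finitely generated $\Z_\ell$-module, say $\Z_\ell^{\,r}\oplus(\text{finite})$; identifying $\mu_{\ell^n}\cong\Z/\ell^n$ over $\bar k$ and using $H^1(U,\Z/\ell^n)=\Hom(\pi^{{\rm et,ab},\ell}(U,x),\Z/\ell^n)$ gives $H^1(\tilde U,\Z_\ell(1))\cong\Z_\ell^{\,r}$, which is $0$ iff $\pi^{{\rm et,ab},\ell}(U,x)$ is finite. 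Since 2)$\Leftrightarrow$7) does not depend on $\ell$, this yields 3)$\Leftrightarrow$7)$\Leftrightarrow$[$\pi^{{\rm et,ab},\ell}(U,x)$ finite for all $\ell\neq p$]; in particular 4)$\Rightarrow$3), and the only thing left for 7)$\Rightarrow$4) is the characteristic-$p$ refinement.

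That refinement — in characteristic $p$, $\pi^{{\rm et,ab},p}(U,x)$ is not merely finite but zero — is the delicate point. I would reduce it to Artin--Schreier: $\pi^{{\rm et,ab},p}(U,x)=0$ iff $H^1_{\rm et}(U,\Z/p)=0$, and the sequence $0\to\Z/p\to\sO_U\xrightarrow{F-1}\sO_U\to0$, together with $H^0(U,\sO_U)=A$ and the surjectivity of $F-1$ on $A$ (clear on $k=\bar k$ and on $1+\frak m$ by completeness), reduces this to the injectivity of $F-1$ on $H^1(U,\sO_U)=H^2_{\frak m}(A)$. Writing $H^2_{\frak m}(A)=\varinjlim_nH^1(\tilde X,\sO_{\tilde X}(nD_{{\rm red}}))$, on which the $p$-power map multiplies the pole order along $D_{{\rm red}}$ by $p$, one uses condition 6) — all components are $\PP^1$'s meeting in a tree — to see that a Frobenius-fixed class must vanish, just as for $A=k[[x,y]]$, where $H^2_{\frak m}(A)=\bigoplus_{a,b\ge1}k\,x^{-a}y^{-b}$ and the Frobenius orbit of any monomial runs to infinity. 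Controlling the transition maps in this colimit and the Frobenius action on it compatibly, using the tree of $\PP^1$'s, is the main obstacle; everything else is the assembly of the equivalences above.
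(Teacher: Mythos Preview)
Your treatment of the equivalences among 1), 2), 3), 5), 6), 7) is correct and close in spirit to the paper's, though you organise it through Kummer theory and Tate modules of the Picard schemes, while the paper runs the $\ell$-adic localisation sequence for the pair $(\tilde X, D_{\rm red})$ together with purity and proper base change to reach the same identification $H^1(\tilde X,\Z_\ell(1))=H^1(U,\Z_\ell(1))$ with the Tate module of $\Pic^0(D_{\rm red})$. Both routes are short once the structure of $\Pic^0(\tilde X)$ from Section~3 is in hand, and your remarks on 6)$\Leftrightarrow$7) and on the finite generation of $\pi^{\rm et,ab,\ell}(U,x)$ are fine.

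The genuine gap is the $p$-part of 4). You correctly reduce $\pi^{\rm et,ab,p}(U,x)=0$ to the injectivity of $F-1$ on $H^1(U,\sO_U)=H^2_{\frak m}(A)$, but you do not prove this, and your proposed attack via the colimit of the $H^1(\tilde X,\sO_{\tilde X}(nD_{\rm red}))$ with a ``pole order runs to infinity'' heuristic is hard to make rigorous: the transition maps in that colimit need not be injective, so a Frobenius-fixed class could sit in the kernel at every finite level, and the analogy with $k[[x,y]]$ does not transfer directly. The paper bypasses this computation entirely. It invokes Boutot's exact sequence
\[
0\longrightarrow H^1(X,G')\longrightarrow H^1(U,G')\longrightarrow \Hom_{k\text{-grp}}(G,\Pic(U))\longrightarrow 0
\]
for a finite commutative $k$-group-scheme $G$ with Cartier dual $G'$, applied with $G=\mu_{p^n}$, $G'=\Z/p^n$. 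The right-hand term vanishes because, by the already-established condition 5), $\Pic^0(U)$ is unipotent and $\Pic(U)/\Pic^0(U)$ is \'etale, so $\Pic(U)$ admits no subgroup-scheme isomorphic to $\mu_{p^n}$; the left-hand term vanishes since $A$ is a complete local ring with algebraically closed residue field. Hence $H^1(U,\Z/p^n)=0$ for all $n$. The idea you are missing is to exploit the \emph{group-scheme} structure of $\Pic(U)$ --- specifically its unipotence --- rather than attacking the Frobenius on the bare cohomology group $H^2_{\frak m}(A)$.
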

\begin{proof}
We firt make general remarks. 
For any surface singularity, one has   the localization sequence 
\ml{4.1}{  H^1(\tilde{X}, \Z_\ell(1))\to H^1(U, \Z_\ell(1))\to\\ H^2_{D_{{\rm red}}}(\tilde{X}, \Z_\ell(1))\to H^2(\tilde{X}, \Z_\ell(1)) \to H^2(U, \Z_\ell(1))\to H^3_{D_{{\rm red}}}(\tilde{X}, \Z_\ell(1)) \to H^3(\tilde{X}, \Z_\ell(1)).}
By purity \cite[Theorem~2.1.1]{Fu}, the restriction map 
$H^1(\tilde{X}, \Z_\ell(1))\to H^1(U, \Z_\ell(1))$ is injective,  and
$H^2_{D_{{\rm red}}}(\tilde{X}, \Z_\ell(1))=\oplus_i \Z_\ell\cdot [ D_i]$. By base change, $H^i(\tilde{X}, \Z_\ell(1))= H^i(D_{{\rm red}}, \Z_\ell(1))$. Thus this group is $0$ for $i\ge 3$, equal to  $\oplus_i \Z_\ell\cdot [ D_i]$ for $i=2$, and equal to $\Pic(D_{{\rm red}})[\ell]$ for $i=1$. In fact, since $H^2(D_{{\rm red}}, \Z_\ell(1))$ is torsion free, one has $\Pic(D_{{\rm red}})[\ell]=\Pic^0(D_{{\rm red}})[\ell]$, where $^0$ means of degree $0$ on each component $D_i$. 
Furthermore, by definition,   the map 
$\oplus_i \Z_\ell\cdot [ D_i] \to \oplus_i \Z_\ell\cdot [ D_i]$ is defined by $[D_i]\mapsto \oplus_j {\rm deg}\sO_{D_j}(D_i)$. Since the intersection matrix is definite, the map is injective, with finite torsion cokernel $\sT$. (This cokernel is 0 if and only if the intersection matrix is unimodular). Again by purity, 
$H^3_{D_{{\rm red}}}(\tilde{X}, \Z_\ell(1)) \subset \oplus_i H^1(D_i^0, \Z_\ell)$ where $D_i^0=D_i\setminus \cup_{j\neq i} D_i\cap D_j$. In particular, 
$H^3_{D_{{\rm red}}}(\tilde{X}, \Z_\ell(1))$ is torsion free. 
So we extract from \eqref{4.1} for any surface singularity the relations
\ga{4.2}{H^1(\tilde{X}, \Z_\ell(1))\to H^1(U, \Z_\ell(1))=\Pic(D_{{\rm red}})[\ell]=\Pic^0(D_{{\rm red}})[\ell]
}
and an exact sequence
\ga{4.3}{0\to \sT\to H^2(U, \Z_\ell(1))\to 
H^3_{D_{{\rm red}}}(\tilde{X}, \Z_\ell(1))\to 0}
with finite $\sT$ and torsion free $H^3_{D_{{\rm red}}}(\tilde{X}, \Z_\ell(1))$.
As $\Pic^0(D_{{\rm red}})$ is a semiabelian variety, we see that \eqref{4.2} implies that 1), 2) and 7) are equivalent conditions. 

From the exact sequence
\ga{4.4}{1\to \sO^\times_{D_{{\rm red}}}\to \oplus_i  \sO^\times_{D_i} \to \oplus_{i<j}k^\times_{D_i\cap D_j}\to 1}
one has that 6) and 7) are equivalent. Furthermore, from the structure of $\Pic(\tilde{X})$ explained in section 3, one has that  5) is equivalent to 7). 

We show that 2) is equivalent to 3). The condition 2) implies that $H^1(U, \mu_{\ell^n})\subset \sT$ for all $n\ge 0$, thus there are finitely many $\mu_{\ell^n}$ torsors on $U$. This shows 2) implies  3). On the other hand, if $\Pic^0(D_{{\rm red}})$ is not trivial, then $\Pic(D_{{\rm red}})[\ell]$ contains $\Z_\ell$. Thus $H^1(U, \Z_\ell(1))$ contains $\Z_\ell$ as well by  
\eqref{4.2}. Thus 3) implies 2). 

Since obviously 4)  implies 3), it remains to see that 3) implies 4). We assume 3). 
For any commutative finite $k$-group-scheme $G$, with Cartier dual $G'={\rm Hom}(G, \G_m)$, 
one has the exact sequence 
\ga{4.5}{0\to H^1(X, G')\to H^1(U, G')\to {\rm Hom}(G, \Pic(U))\to 0.} 
(See \cite[III,Th\'eor\`eme~4.1]{Boutot} and \cite[III,Corollaire~4.9]{Boutot} for the $0$ on the right, which we will use only on the proof of Theorem \ref{thm4.2}, as $k=\bar k$).
We apply it for $G=\Z/p^n$ for some $n\in \N\setminus \{0,1\}$.
Since $\Pic(U)$ is an extension of a discrete (\'etale) group by $\Pic^0(U)$ which is a product of $\G_a$s by 5),
one has ${\rm Hom}(\mu_{p^n}, \Pic(U))=0$. On the other hand, $A\xrightarrow{x\mapsto (x^{p^n}-x)}A$ is surjective, as $A$ is complete. Thus $H^1(U, \Z/p^n)=H^1(X,\Z/p^n)= 0$. This shows that 3) implies 4) and finishes the proof of the theorem.

\end{proof}
\begin{thm} \label{thm4.2}
 Let $(X,x)$ be a surface singularity over an algebraically closed field $k$. 
\begin{itemize}
\item[1)] If $\pi^N_{{\rm loc}}(U,X, x)$  is a finite group-scheme, $(X,x)$ is a rational singularity, in particular the dualizing sheaf $\omega_U$ has finite order.
\item[2)] If in addition, the order of $\omega_U$ is prime to $p$, then 
 there is $\big(h: V\to U, \pi^N(U,x),y\big) \in \sC_{{\rm loc}}(U,x)$  such that  the surface singularity $(Y,y)$ of the integral closure $\tilde{h}: Y\to X$ is a rational double point. 
\item[3)] If $\pi^N_{{\rm loc}}(U,X,x)=0$, then $(X,x)$ is a rational double point. 
\end{itemize}
\end{thm}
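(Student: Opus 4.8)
The plan is to establish the three statements in the order 1), 3), 2), deducing 3) from 1) and 2) from 3) and 1).

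\smallskip

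\noindent\emph{Statement 1).} If $\mathrm{char}\,k=0$ every finite $k$-group-scheme is \'etale, so $\pi^N_{\rm loc}(U,X,x)=\pi^{\rm et}(U,x)$ and the assertion is Mumford's theorem as refined by Flenner and Artin; assume therefore $p>0$. By Lemma \ref{lem2.5} the finiteness of $\pi^N_{\rm loc}(U,X,x)$ makes $\pi^{\rm et}(U,x)$, hence every $\pi^{\rm et,ab,\ell}(U,x)$, finite, so Theorem \ref{thm4.1} gives that $\Pic^0(\tilde X)=\Pic^0(U)$ is a smooth connected commutative unipotent $k$-group-scheme. I would then argue that it is trivial: otherwise it contains a copy of $\G_a$, so the Frobenius kernels $\G_a[F^n]\subset\G_a\subset\Pic(U)$ define, via the inclusions, nonzero classes in $\Hom(\G_a[F^n],\Pic(U))$, and feeding these into the sequence \eqref{4.5} with $G=\G_a[F^n]$ yields a compatible pro-system of infinitesimal torsors on $U$, none descending to $X$, hence an infinite pro-quotient of $\pi^N_{\rm loc}(U,X,x)$ --- a contradiction. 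Thus $\Pic^0(\tilde X)=0$, so $(X,x)$ is rational by Lemma \ref{lem3.1}; and then $\Pic(U)$ is finite, so $\omega_U\in\Pic(U)$ has finite order.

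\smallskip

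\noindent\emph{Statement 3).} If $\pi^N_{\rm loc}(U,X,x)=0$, then the homomorphism $\rho$ of Proposition~2.3, being faithfully flat with trivial kernel, is an isomorphism; hence $\sC_{\rm loc}(U,x)\simeq\sC(X,x)$, every finite torsor on $U$ descends to $X$, and \eqref{4.5} gives $\Hom(G,\Pic(U))=0$ for every finite commutative $k$-group-scheme $G$. By 1), $(X,x)$ is rational, so $\Pic^0(\tilde X)=0$ and $\Pic(U)=\Pic(\tilde X)/\langle D\rangle$ is a finite constant group-scheme; applying the previous vanishing with $G=\Z/n$ forces $\Pic(U)=0$ (equivalently, the intersection matrix $(D_i\cdot D_j)$ is unimodular). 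In particular $\omega_U=\sO_U$, whence $\omega_X=j_*\omega_U=\sO_X$ and $X$ is Gorenstein; a rational Gorenstein surface singularity is a rational double point.

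\smallskip

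\noindent\emph{Statement 2).} Let $m=\mathrm{ord}(\omega_U)$, which by hypothesis is prime to $p$. I would take $h:V\to U$ to be the canonical cyclic cover of degree $m$ trivializing $\omega_U$ (and, for the version of the statement phrased with the whole relative group, the universal relative cover, which dominates it). Because $m$ is prime to $p$ this is a $\mu_m$-torsor, hence \'etale over $U$, so the integral closure $\tilde h:Y\to X$ is finite, normal, \'etale over $U$, with an isolated singularity at the point $y$ above $x$, and $(h:V\to U,\mu_m,y)\in\sC_{\rm loc}(U,x)$. Since $h^*\omega_U$ and $\omega_{V/U}$ are trivial, $\omega_Y=\sO_Y$, so $Y$ is Gorenstein; and since $\pi^N_{\rm loc}(V,Y,y)$ is again finite --- it is $\ker(\pi^N_{\rm loc}(U,X,x)\to\mu_m)$ by the functoriality of the relative group under a finite cover, and $0$ for the universal relative cover --- statement 1) shows that $Y$ is rational. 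A rational Gorenstein surface singularity being a rational double point, $(Y,y)$ is one; for the universal relative cover one may instead invoke 3) directly.

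\smallskip

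\noindent\emph{Main obstacle.} I expect the heart of the matter to be in 2): setting up the functoriality of $\pi^N_{\rm loc}(-,-,-)$ under a finite cover --- in particular realizing the universal relative cover as an \emph{honest finite} cover with normal integral closure --- in characteristic $p$, where infinitesimal group-schemes obstruct the usual $\pi_1$-exact-sequence arguments; this is precisely where the assumption $\gcd(\mathrm{ord}(\omega_U),p)=1$ is used, since it forces the relevant cover to be tame. In 1) the delicate point is to verify that a nontrivial unipotent $\Pic^0(\tilde X)$ really makes $\pi^N_{\rm loc}(U,X,x)$ \emph{infinite}, not merely nontrivial.
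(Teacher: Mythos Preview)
Your proposal is correct and follows essentially the same strategy as the paper: use Lemma~\ref{lem2.5} and Theorem~\ref{thm4.1} to reduce $\Pic^0(U)$ to a unipotent group, then kill it via Boutot's sequence~\eqref{4.5}, and handle 2) by the cyclic $\mu_M$-cover attached to $\omega_U$. There are only minor variations worth noting.

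In part 1) the paper takes $G=\Z/p^n$ in \eqref{4.5}, obtaining $\mu_{p^n}$-torsors on $U$ not from $X$; you instead feed in $G=\G_a[F^n]$ via the inclusion of Frobenius kernels into $\G_a\subset\Pic^0(U)$. Your choice has the pleasant feature that the homomorphism $\G_a[F^n]\hookrightarrow\Pic(U)$ is injective and does not factor through any proper quotient, so the resulting torsors genuinely have structure group of order $p^n$ and assemble into an infinite pro-quotient of $\pi^N_{\rm loc}(U,X,x)$; with the paper's choice every $\Z/p^n\to\G_a$ already factors through $\Z/p$, so the contradiction is rather that $\Hom(\Z/p,\Pic(U))\supset k$ is infinite while $\Hom(\pi^N_{\rm loc}(U,X,x),\mu_p)$ is finite. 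Both routes work.

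Your direct argument for 3) --- that $\pi^N_{\rm loc}(U,X,x)=0$ forces $\Hom(G,\Pic(U))=0$ for all finite commutative $G$, hence $\Pic(U)=0$, hence $\omega_U=\sO_U$ --- is more explicit than the paper, which simply writes ``$\omega_U$ has then order $1$'' and invokes 2). The paper proves 2) before 3) and deduces 3) from 2); you reverse the order. Either way the logical content is the same.

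Your ``main obstacle'' diagnosis is slightly off: the paper disposes of 2) in two lines, observing that a $\mu_M$-torsor $h:V\to U$ gives $\pi^N_{\rm loc}(V,y)\subset\pi^N_{\rm loc}(U,x)$ and hence $\pi^N_{\rm loc}(V,Y,y)\subset\pi^N_{\rm loc}(U,X,x)$, which is all the functoriality needed. No ``universal relative cover'' or exact-sequence machinery is invoked. The genuinely delicate step is the one you also flag in 1), and there your argument is in fact slightly sharper than the paper's.
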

\begin{proof} We show 1). 
 If $\pi^N_{{\rm loc}}(U,X, x)$ is a finite group-scheme, then, by Lemma \ref{lem2.5},   the condition 3) of Theorem \ref{thm4.1} is fulfilled, thus $\Pic^0(\tilde{X})=\Pic^0(U)$ is a product of $\G_a$s. We apply \eqref{4.5} to $G=\Z/p^n$. If $\Pic^0(U)$ is not trivial, then ${\rm Hom}(\Z/p^n, \Pic(U))\neq 0$ for all $n\ge 0$. Thus $U$ admits nontrivial $\mu_{p^n}$-torsors for all $n\ge 1$, which do not come from $X$. This contradicts the finiteness of $\pi^N_{{\rm loc}}(U,X,x)$. Thus $\Pic^0(U)=\Pic^0(\tilde{X})=0$.  We apply  Lemma \ref{lem3.1} to finish conclude that $(X,x)$ is a rational singularity. Again by Lemma \ref{lem3.1}, all line bundles on $U$, in particular the dualizing sheaf $\omega_U$ of $U$, is torsion. This proves 1).
 
We show 2). 
 So there is a $M\in \N\setminus \{0\}$ such that $\omega^M_U\cong \sO_U$.  Choosing such a trivialization yields an $\sO_U$-algebra structure on $\sA=\oplus_0^{M-1} \omega^i_U$ and thus  a flat nontrivial $\mu_M$-torsor $h:V=\Spec_{\sO_U} \sA \to U$.  Since $(M,p)=1$, $h$ is \'etale, thus $(Y,y)$ is normal. In fact one has $Y=\Spec_{\sO_X} \sB$ where $\sB$ is the $\sO_X$-algebra $j_*\sA, j: U\subset X$. By duality theory, $h_*\omega_Y=\sH om_{\sO_X}(h_*\sO_Y, \omega_X)\cong_{\sO_X} h_*\sO_Y$. Let $y\in Y$ be the closed point of $Y$. Thus $(Y,y)$ is a Gorenstein normal surface singularity. On the other hand, since $h$ is a $\mu_M$-torsor, one has $\pi^N(V,y)\subset \pi^N(U,x)$, thus $\pi^N_{{\rm loc}}(V,Y,y)\subset \pi^N_{{\rm loc}}(U,X,x)$, and therefore is a finite $k$-group-scheme. Thus by 1) it is a rational singularity. Thus $(Y,y)$ is a Gorenstein rational singularity, thus is a rational double point (\cite{Durfee}). 

Now 3) follows directly from 2) as $\omega_U$ has then order $1$. 

\end{proof}
We now refer to \cite[Section~3]{Ar3} for the notation, and we go to Artin's list \cite[Section~4/5]{Ar3} to conclude using Theorem \ref{thm4.2} 3):
\begin{cor} \label{cor4.3}
 If $\pi^N_{\rm loc}(U,X,x)=0$, then $X$ admits a finite  morphism $f:\widehat{\A}^2\to X$. The morphism $f$ is the identity (i.e. $(X,x)$ is smooth) except possibly in the cases:
\begin{itemize}
 \item[1)] ${\rm char}(k)=2$, $E^1_8, E_8^3$
\item[2)] ${\rm char}(k)=3$, $ E_8^1$
\end{itemize}

\end{cor}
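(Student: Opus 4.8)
The plan is to combine Theorem~\ref{thm4.2}~3) with Artin's classification of surface singularities admitting a finite morphism from $\widehat{\A}^2$ (equivalently, his list of singularities with trivial local fundamental group in the relevant sense). By Theorem~\ref{thm4.2}~3), the hypothesis $\pi^N_{\rm loc}(U,X,x)=0$ forces $(X,x)$ to be a rational double point. So the first step is simply to invoke that theorem and reduce to the case where $(X,x)$ is an RDP. Then I would go through Artin's list \cite[Section~4/5]{Ar3} of rational double points in each characteristic and check, for each one, whether there is a finite morphism $\widehat{\A}^2\to X$, and whether that morphism can be taken to be the identity.

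In characteristic $0$ the only rational double point with trivial (local/\'etale) fundamental group is the smooth point itself, so $f=\mathrm{id}$. In characteristic $p>0$, Artin's analysis shows that almost all RDPs still acquire a nontrivial local Nori fundamental group-scheme — for the non-smooth ones one can produce torsors on $U$ under finite infinitesimal group-schemes (e.g.\ $\mu_p$ or $\alpha_p$) that do not extend to $X$, so $\pi^N_{\rm loc}(U,X,x)\neq 0$ — \emph{except} for a short explicit list of exceptional singularities in small characteristic. The second step is therefore to read off from Artin's tables precisely which RDPs in characteristic $2$ and $3$ fail to have nontrivial $\pi^N_{\rm loc}(U,X,x)$: these are exactly the ones named $E_8^1$ and $E_8^3$ in characteristic $2$, and $E_8^1$ in characteristic $3$, in Artin's notation. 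For those, one still has a finite morphism $f:\widehat{\A}^2\to X$ (this is part of Artin's structure theory for RDPs: each one is dominated by $\widehat{\A}^2$ via a finite map, realized through the resolution graph / the quotient presentation), but $f$ need not be an isomorphism — these singularities are genuinely non-smooth yet have trivial local Nori fundamental group-scheme.

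Concretely, the key steps in order are: (i) apply Theorem~\ref{thm4.2}~3) to get that $(X,x)$ is a rational double point; (ii) recall Artin's result that every rational double point is dominated by a finite morphism from $\widehat{\A}^2$, which already gives the existence of $f$; (iii) for each RDP type in each characteristic, decide whether $\pi^N_{\rm loc}(U,X,x)=0$ by checking for the presence of non-extendable torsors under finite flat (possibly infinitesimal) $k$-group-schemes, using the exact sequence \eqref{4.5} together with the structure of $\Pic(U)$; (iv) observe that the hypothesis $\pi^N_{\rm loc}(U,X,x)=0$ rules out all RDP types except the smooth point and the three exceptional cases $E_8^1,E_8^3$ (char $2$) and $E_8^1$ (char $3$), so that outside those cases $f$ must be the identity.

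The main obstacle is step (iii): carefully matching Artin's combinatorial/explicit data for each exceptional-characteristic RDP with the vanishing or non-vanishing of the local Nori fundamental group-scheme. For the $A_n$, $D_n$ and generic $E_n$ cases in char $p$ this is handled by the tame part (the \'etale fundamental group, which is the classical finite group of the corresponding Dynkin diagram and is nontrivial), but for the wild cases one must use the infinitesimal torsors, and the bookkeeping of which group-schemes act and whether the resulting torsor extends to $X$ is exactly where Artin's special list \cite[Section~4/5]{Ar3} enters. Once one trusts Artin's computation of those local invariants, the corollary follows by inspection; the proof is essentially a translation of his list into the language of $\pi^N_{\rm loc}(U,X,x)$, with no further geometry needed beyond Theorem~\ref{thm4.2}.
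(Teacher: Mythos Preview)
Your proposal is correct and follows the same route as the paper: apply Theorem~\ref{thm4.2}~3) to reduce to rational double points, then invoke Artin's classification and his explicit list in \cite[Sections~4--5]{Ar3} to read off both the existence of the finite cover $\widehat{\A}^2\to X$ and the short list of non-smooth RDPs whose local fundamental group-scheme could still be trivial. The paper's own proof is in fact even terser than yours---it is literally a one-line reference to Artin's tables---so your steps (iii)--(iv), where you sketch how one would verify case by case that all other RDPs carry a non-extendable finite flat torsor, supply detail that the paper leaves entirely to \cite{Ar3}.
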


\bibliographystyle{plain}
\renewcommand\refname{References}

\end{document}